\documentclass{elsarticle}
\usepackage{amsmath,amsthm,amssymb}

\newtheorem{theo}{Theorem}
\newtheorem{lemm}[theo]{Lemma}
\newtheorem{prop}[theo]{Proposition}
\newtheorem{coro}[theo]{Corollary}
\theoremstyle{remark}
\newtheorem*{rem}{\bf Remark}
\newtheorem*{exa}{\bf Example}
    \makeatletter
    \def\ps@pprintTitle{%
      \let\@oddhead\@empty
      \let\@evenhead\@empty
      \let\@oddfoot\@empty
      \let\@evenfoot\@oddfoot
    }
    \makeatother

\begin{document}

\title{Bounds for the Euclidean minima of function fields}

\author[1]{Piotr Maciak}
\ead{piotr.maciak@epfl.ch}

\author[2]{Marina Monsurr\`o}
\ead{marina.monsurro@gmail.com}

\author[3]{Leonardo Zapponi}
\ead{zapponi@math.jussieu.fr}

\address[1]{\'Ecole Polytechnique F\'ed\'erale de Lausanne (EPFL), Lausanne (CH)}
\address[2]{Univerist\`a Europea di Roma (UER), Roma (I)}
\address[3]{Universit\'e Pierre et Maie Curie (UPMC), Paris (F)}

\begin{abstract} In this paper, we define Euclidean minima for function fields and give some bound for this invariant. We furthermore show that the results are analogous to those obtained in the number field case.
\end{abstract}

\begin{keyword} Function fields\sep algebraic curves\sep Euclidean minima

\end{keyword}

\maketitle
\tableofcontents
\newpage
\section{Introduction}

The Euclidean minimum is a numerical invariant which measures how elements of a number field can be approximated by algebraic integers. Its study is a classical topic in algebraic number theory, going back to Minkowski and Hurwitz. Many mathematicians have studied its properties; among the others, we can cite Barnes, Swinnerton-Dyer, Cassels, Davenport and Fuchs, but this list is far from being exhaustive. This research area is still very active nowadays, and many problems are still open. We can mention for example the works of Van der Linden, Cerri and Bayer-Fluckiger. One of the central questions is to find upper bounds for Euclidean minima. We refer to~\cite{l} for a nice survey of the topic.

The Euclidean minima can also be defined in the function field case, replacing the absolute value by the degree. Not much is known in this situation. The existence of Euclidean function fields was studied by Armitage, Markanda, Madam, Queen and Smith but Euclidean minima never explicitly appear in their work. The aim of the present paper is to investigate this question more closely, trying to obtain bounds analoguous to  the known bounds in the number field case.

The paper is organized as follows: section 2, which is of geometric flavour is mainly inspired from the guiding example of polynomials: it is well-known that, given a field $k$, the ring $k[T]$ is euclidean with respect to the degree. Now, from a geometric point of view, the field $k(T)$ is just the function field of the projective line, and $k[T]$ is the subring consisting of functions which are regular outside the point at infinity. Starting from this observation, we try to generalize the construction: we first of all consider a (smooth, proper)  curve $\mathcal C$ over a field $k$, which is assumed to be algebraically closed. We then consider a finite subset $S$ of $\mathcal C$ and define a degree function, denoted by $\deg_S$, for rational functions on the curve. The ring $\mathcal O_S$ is the subset of the field $K=k(\mathcal C)$ consisting of rational functions which are regular outside $S$. In other words, the affine curve $\mathcal C\setminus S$ is just the spectrum of $\mathcal O_S$. For $\mathcal C=\Bbb P^1$ and $S=\{\infty\}$, we then find $K=k(T)$ and the ring $\mathcal O_S=k[T]$ is an univariate polynomial ring over $k$, the function $\deg_S$ coinciding with the usual degree for polynomials. The (logarithmic) Euclidean minimum of $K$ with respect to $S$, denoted by $M_S(K)$, is then defined as
\[M_S(K)=\max_{x\in K}\min_{y\in\mathcal O_S}\{\deg_S(x-y)\}.\]
This integer measures how well a rational function can be approximated by an element of $\mathcal O_S$. In particular, the ring $\mathcal O_S$ is Euclidean with respect to the function $\deg_S$ if and only if the inequality $M_S(K)<0$ holds. The main result of the section relates $M_S(K)$ to a geometric invariant we introduce here: the degree of speciality $\mu(S)$ of $S$, which, roughly speaking, describes the behaviour of differential forms on $\mathcal S$ having no poles outside $S$. In Theorem 2, we prove that we have the inequality $M_S(K)\leq\mu(S)$. The main ingredient of the proof is the Riemann-Roch theorem. It then follows that $M_K(S)$ is less than or equal to $2g-1$, where $g$ denotes the genus of $\mathcal C$ (this bound again follows from Riemann-Roch formula), providing an upper bound which can be considered, as we will show later, as the precise analogue of the results obtained in the number field case in~\cite{b}. The end of the section is devoted to the special case where the set $S$ is a singleton. In this situation, we can prove (cf. Proposition 3) that we have in fact the equality $M_S(K)=\mu(S)$ and, moreover, that $\mu(S)$ is the greatest integer appearing in the Weierstrass gap sequence of $S$. In particular, we obtain the lower bound $M_S(K)\geq g$ and we provide examples where $M_S(K)$ is explicitly computed (hyperelliptic curves, \'etale covers of the affine line, classical curves).

As stated above, in section 3 we show that Theorem 2 can be considered as a (logarithmic) analogue of the general bounds obtained in the number field case. It is important to notice that there is a crucial difference between number fields an function fields. Indeed, a number field can be obtained in a unique way as an extension of the rationals. In particular, its degree and discriminant are uniquely defined global objects. For a function field $K$, things are quite different, since it can be realized in many (infinite) ways as an extension of the field $k(T)$. Its degree and discriminant are no more intrinsic invariants and strongly depend on this realization. This is the main reason why in section 2 we tried to obtain results avoiding this arbitrary choice. But now, if we want to find a parallel between the present results and the number field case couterpart, we have to consider the field $K$ as a finite extension of the field $k(T)$. In this setting, we slightly modify the definition of the Euclidian minimum, now simply denoted by $M(K)$, which turns out to be just a special case of the one defined before (for a particular choice of the set $S$). In theorem 8, we obtain an upper bound for $M(K)$ only depending on the degree and on the discriminant of the extension $\mathcal O_K/k[T]$ (here, the ring $\mathcal O_K$ is the integral closure of $k[T]$ in $K$, which, once again coincides withnthe ring $\mathcal O_S$ previously defined). In fact, Theorem 8 is just a weaker version (and a direct consequence) of Theorem 2, the main ingredient for its proof being the Hurwitz formula. The main limitation is that we have to suppose that the ramification above the point at infinity is tame. A more general, and sharper result could be obtained by considering the full ramification of the extension $K/k(T)$ (taking account of the behaviour above infinity) but we decided to state it in a  form more similar to the number field case.

Until now, we worked in a geometric setting, assuming that the base field $k$ is algebraically closed. In the last section of the paper, we drop this assumption and investigate the more general case of a perfect base field. Most of the constructions and results remain valid in this situation: the Euclidean minimum and the index of speciality are defined in the same way and the inequality of Theorem 2 still holds. However, it is no longer true that $\mu(S)$ is less than or equal to $2g-1$. Moreover, it actually turns out that the Euclidean minimum depends on the base field. Indeed, we explicitly treat the case of hyperelliptic curves and in Theorem 10 we show that in this case $M(K)$ can actually be equal to $2g$, which is impossible if $k$ is algebraically closed.

\

We would like to thank Eva Bayer-Fluckiger, who originally motivated and then constantly encouraged this joint work. Her results in the number field case have been the main inspiration of the paper. The second and third authors also express their thanks for her invitation at the EPFL, where most of the results were obtained.

\section{Geometric approach}
\subsection{Settings and notation} In the following, $\mathcal C$ denotes a smooth, projective curve of genus $g$, defined over an algebraically closed field $k$. Set $K=k(\mathcal C)$ and let $v_P$ (resp. $\mathcal O_P$) be the valuation of $K$ (resp. the local ring) associated to a point $P$ of $\mathcal C$. For any finite, nonempty subset $S$ of $\mathcal C$, let $\mathcal O_S$ be the subring of $K$ consisting of rational functions regular outside $S$. For any rational function $x\in K^\times$,  consider the integer $\deg_S(x)$ defined by
\[\deg_S(x)=-\sum_{P\in S}v_P(x).\]
For simplicity, we furthermore set $\deg_S(0)=-\infty$. It is easily checked that for any element $x\neq0$ of $\mathcal O_S$, we have the inequality $\deg_S(x)\geq0$, which is an equality if and only if $x$ belongs to $\mathcal O_S^\times$. By construction, for any $x,y\in K$, we have the relations
\begin{itemize}
\item $\deg_S(xy)=\deg_S(x)+\deg_S(y)$,
\item $\deg_S(x+y)\leq\max\{\deg_S(x),\deg_S(y)\}$.
\end{itemize}

\begin{rem} For $\mathcal C=\Bbb P^1$ and $S=\{\infty\}$, the ring $\mathcal O_S$ can be identified with the polynomial ring $k[T]$ and, for any $x\in\mathcal O_S$, the integer $\deg_S(x)$ is the usual degree of a polynomial.
\end{rem}

The {\it Euclidean minimum} of $K$ (with respect to $S$) is the integer defined as
\[M_S(K)=\max_{x\in K^\times}\min_{y\in\mathcal O_S}\{\deg_S(x-y)\}.\]

The inequality $M_S(K)<0$ holds if and only if the ring $\mathcal O_S$ is Euclidean with respect to the Euclidean function $\deg_S$.

\subsection{The index of speciality} Following the usual notation, for any divisor $D$ on $\mathcal C$, let $\Omega(D)$ be the $k$-vector space of differential forms $\omega$ on $\mathcal C$ satisfying the relations $v_P(\omega)+v_P(D)\geq0$ for any $P\in\mathcal C$ (in~\cite{s}, this $k$-vector space is denoted by $\Omega(-D)$, we nevertheless adopt the more recent and usual notation). Denote by $\Bbb Z[S]$ the free abelian group generated by the elements of $S$, i.e. the subgroup of $\mbox{Div}(\mathcal C)$ consisting of divisors whose support is contained in $S$. The {\it index of speciality} of $S$ is the integer $\mu(S)$ defined by
\[\mu(S)=\min\{\deg(D)\,\,|\,\,D\in\Bbb Z[S]\mbox{ and }\Omega(-D)=0\}.\]

\begin{rem} The dimension of the $k$-vector space $\Omega(D)$ only depending on the linear equivalence of the divisor $D$, we may replace $\Bbb Z[S]$ by the subgoup of $\mbox{Pic}(\mathcal C)$ generated by $S$.
\end{rem}

\begin{lemm} We have the inequalities
\[g-1\leq\mu(S)\leq2g-1.\]
\end{lemm}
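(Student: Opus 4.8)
The plan is to deduce both inequalities directly from the Riemann--Roch theorem, after first translating the vanishing condition $\Omega(-D)=0$ into a statement about Riemann--Roch spaces. Writing $L(E)=\{f\in K^\times\mid\mathrm{div}(f)+E\geq0\}\cup\{0\}$, I would fix a nonzero differential $\omega_0$ and let $W=\mathrm{div}(\omega_0)$ be the associated canonical divisor. The assignment $\omega\mapsto\omega/\omega_0$ is then a $k$-linear isomorphism $\Omega(-D)\cong L(W-D)$: indeed $\omega\in\Omega(-D)$ means $v_P(\omega)\geq v_P(D)$ for all $P$, i.e. $\mathrm{div}(\omega)\geq D$, which is equivalent to $\mathrm{div}(\omega/\omega_0)+(W-D)\geq0$. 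Consequently $\Omega(-D)=0$ holds if and only if $\dim_kL(W-D)=0$, and the whole argument can be phrased through the function $\dim_kL$.

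For the lower bound, let $D\in\Bbb Z[S]$ be any divisor with $\Omega(-D)=0$, so that $\dim_kL(W-D)=0$. The Riemann--Roch formula reads
\[\dim_kL(D)-\dim_kL(W-D)=\deg(D)+1-g.\]
Since the second term on the left vanishes and $\dim_kL(D)\geq0$, this forces $\deg(D)\geq g-1$. As this holds for every divisor $D$ admissible in the definition of $\mu(S)$, passing to the minimum yields $\mu(S)\geq g-1$.

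For the upper bound I would exhibit one explicit admissible divisor of degree $2g-1$. Since $S$ is nonempty, choose a point $P\in S$ and set $D=(2g-1)P\in\Bbb Z[S]$. Then $\deg(W-D)=(2g-2)-(2g-1)=-1<0$, and a divisor of negative degree supports no nonzero global function: a nonzero $f\in L(W-D)$ would make $\mathrm{div}(f)+(W-D)$ an effective divisor of degree $\deg(W-D)<0$, which is impossible. Hence $\dim_kL(W-D)=0$, so $\Omega(-D)=0$, giving $\mu(S)\leq\deg(D)=2g-1$. This construction also shows that the set defining $\mu(S)$ is nonempty, so that the invariant is well defined.

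I do not expect a genuine obstacle, since the whole argument is essentially bookkeeping around Riemann--Roch. The two points requiring care are the identification $\Omega(-D)\cong L(W-D)$, which must be matched against the sign convention in the paper's definition of $\Omega(D)$, and the degenerate low-genus case: for $g=0$ the claim becomes $-1\leq\mu(S)\leq-1$, and one should check that $\Bbb Z[S]$ genuinely contains the divisor $D=(2g-1)P$ of negative degree (which it does, being a \emph{free abelian group}, hence allowing negative coefficients). Both checks are routine once the reduction to $\dim_kL$ is in place.
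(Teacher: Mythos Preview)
Your argument is correct and follows essentially the same route as the paper: both inequalities come straight from Riemann--Roch, the lower bound via $\dim_kL(D)\geq0$ and the upper bound via a degree count on the canonical divisor. The only cosmetic difference is that you first pass through the identification $\Omega(-D)\cong L(W-D)$ and argue that $L(W-D)=0$ when $\deg(W-D)<0$, whereas the paper argues directly on the differential side (a nonzero $\omega\in\Omega(-D)$ would force $2g-2=\deg(\omega)\geq\deg(D)\geq2g-1$); these are dual versions of the same observation.
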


\begin{proof} Let $D$ be a divisor on $\mathcal C$ of degree greater than or equal to  $2g-1$ and suppose that there exist an element $\omega\in\Omega(-D)$ different from $0$. In this case, we obtain the relations
\[2g-2=\deg(\omega)\geq\deg(D)\geq2g-1,\]
a contradiction. We therefore have $\Omega(-D)=0$. Since the field $k$ is algebraically closed, there exists an element $D\in\Bbb Z[S]$ of degree $2g-1$, which implies that $\mu(S)\leq2g-1$. Suppose now that $\deg(D)<g-1$ and denote by $L(D)$ the $k$-vector space of rational functions $x\in K$ for which $v_P(x)+v_P(D)\geq0$. The Riemann-Roch formula leads to the relations
\[\dim_k\Omega(-D)=\dim_kL(D)-\deg(D)+g-1>0,\]
and the second inequality follows.
\end{proof}

\subsection{An upper bound for Euclidean minima}

We now prove the main result of the section. We will later see that (and explain why) it is analogous to the bound obtained in the number field case.

\begin{theo} We have the inequality
\[M_S(K)\leq\mu(S).\]
\end{theo}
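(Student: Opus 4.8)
The plan is to reduce the inequality to a statement about approximating an arbitrary rational function by an element of $\mathcal O_S$ with prescribed precision at the points of $S$. Fix a divisor $D\in\Bbb Z[S]$ realizing the minimum, so that $\deg(D)=\mu(S)$ and $\Omega(-D)=0$. Unwinding the definition of $M_S(K)$, it suffices to prove the following: for every $x\in K^\times$ there exists $y\in\mathcal O_S$ with $v_P(x-y)\geq-v_P(D)$ for all $P\in S$. Indeed, such inequalities give
\[\deg_S(x-y)=-\sum_{P\in S}v_P(x-y)\leq\sum_{P\in S}v_P(D)=\deg(D)=\mu(S),\]
whence $\min_{y}\deg_S(x-y)\leq\mu(S)$ for every $x$, and taking the maximum over $x$ yields the claim.

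To produce such a $y$, I would phrase the condition as the matching of principal parts at the points of $S$. Choose an auxiliary divisor $D_2\in\Bbb Z[S]$ with $D_2\geq D$ and large enough that $v_P(x)\geq-v_P(D_2)$ for every $P\in S$; this is possible since $x$ has only finitely many poles. Writing $D_1=D$ and $\mathcal O_P(D_i)=\{f\in K\mid v_P(f)\geq-v_P(D_i)\}$, I consider the $k$-linear reduction map
\[\rho\colon L(D_2)\longrightarrow\bigoplus_{P\in S}\mathcal O_P(D_2)/\mathcal O_P(D_1)\]
sending a function to the collection of its truncated principal parts at the points of $S$. Its kernel is exactly $L(D_1)$, and the target has dimension $\sum_{P\in S}(v_P(D_2)-v_P(D_1))=\deg(D_2)-\deg(D_1)$. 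Since $x$ lies in $\mathcal O_P(D_2)$ at each $P\in S$ by the choice of $D_2$, its principal parts define a well-defined element of the target.

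The heart of the argument, and the step where the hypothesis $\Omega(-D)=0$ is used, is the surjectivity of $\rho$. I expect this to follow from a dimension count: by the Riemann-Roch formula used in the proof of the preceding lemma, $\dim_kL(D_i)=\deg(D_i)-g+1+\dim_k\Omega(-D_i)$, so that $\dim_kL(D_2)-\dim_kL(D_1)=\deg(D_2)-\deg(D_1)+\dim_k\Omega(-D_2)-\dim_k\Omega(-D_1)$. Now $\Omega(-D_1)=\Omega(-D)=0$ by hypothesis, and since $D_2\geq D_1$ one has $\Omega(-D_2)\subseteq\Omega(-D_1)=0$ as well. Hence $\dim_kL(D_2)-\dim_kL(D_1)=\deg(D_2)-\deg(D_1)$ matches the dimension of the target of $\rho$, and as $\ker\rho=L(D_1)$ the map $\rho$ is surjective. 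Choosing $y\in L(D_2)\subseteq\mathcal O_S$ with $\rho(y)$ equal to the class of $x$ then forces $v_P(x-y)\geq-v_P(D)$ for all $P\in S$, which is exactly what was needed.

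The main obstacle I anticipate is conceptual rather than computational: one must notice that the quantity $\deg_S$ sees only the behaviour of $x$ at the finitely many points of $S$, so that the poles of $x$ away from $S$ (which an element of $\mathcal O_S$ can never cancel) are irrelevant and must be kept out of the bookkeeping. This is why the approximation is formulated purely in terms of the valuations $v_P$ for $P\in S$ and why the auxiliary divisor $D_2$ is taken supported on $S$. Once this is isolated, the surjectivity of $\rho$ is the only real input, and it is governed entirely by the vanishing $\Omega(-D)=0$; the monotonicity $\Omega(-D_2)\subseteq\Omega(-D_1)$ for $D_2\geq D_1$ is what allows $D_2$ to be enlarged freely without reintroducing an obstruction.
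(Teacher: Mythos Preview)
Your argument is correct. You and the paper reach the same target inequality $v_P(x-y)\geq -v_P(D)$ for all $P\in S$, but by different mechanisms. The paper works adelically: it forms the adele $r$ equal to $x$ on $S$ and $0$ elsewhere, then invokes Serre duality in the form $A_K/(A_K(D)+K)\cong\Omega(-D)^\vee=0$ to write $r=r'+y$ with $r'\in A_K(D)$ and $y\in K$; the vanishing of the $P$-components of $r$ for $P\notin S$ forces $y\in\mathcal O_S$, and the $P\in S$ components give the desired bound. Your route avoids adeles entirely, replacing the global statement $A_K=A_K(D)+K$ by the finite-dimensional surjectivity of the principal-parts map $\rho\colon L(D_2)\to\bigoplus_{P\in S}\mathcal O_P(D_2)/\mathcal O_P(D_1)$, which you extract from Riemann--Roch and the monotonicity $\Omega(-D_2)\subseteq\Omega(-D_1)=0$. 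The paper's version is cleaner and uniform in $x$ (no auxiliary $D_2$ is needed), while yours is more elementary and makes the role of $\Omega(-D)=0$ completely explicit as the exact obstruction to matching principal parts; in effect you are reproving, in this special situation, the piece of the adelic statement that the paper quotes.
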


\begin{proof} Let $A_K$ denote the adele ring of $K$, i.e. an element $r\in A_K$ is a collection $r=(r_P)_{P\in\mathcal C}$ of rational functions such that $r_P\in\mathcal O_P$ for almost all $P$ in $\mathcal C$ (see~\cite{s}, Chapter II, \S5 for general definitions and properties of adele rings on curves). For any divisor $D$ on $\mathcal C$, let $A_K(D)$ denote the $k$-vector space of adeles $r$ such that $v_P(r_P)+v_P(D)\geq0$ for all $P\in\mathcal C$. By Serre duality, the quotient $I(D)=A_K/(A_K(D)+K)$ is canonically isomorphic to the dual of the $k$-vector space $\Omega(-D)$. Let now $D\in\Bbb Z[S]$ be a divisor such that $\deg(D)=\mu(S)$. Fix a rational function $x\in K^\times$ and consider the adele $r=(r_P)$ defined by
\[r_P=\begin{cases}
x\quad\mbox{ for }P\in S,\\
0\quad\mbox{  otherwise.}
\end{cases}\]
By construction, we have $\Omega(D)=0$ and therefore $I(D)=0$. In particular, there exist $y\in K$ and $r'\in A_K(D)$ such that $r=r'+y$. It then follows that the poles of the rational function $y$ are contained in $S$, and thus we have $y\in\mathcal O_S$. Furthermore, from the definition of $r$, we have the inequalities
\[v_P(x-y)+v_P(D)\geq0\]
for any $P\in S$, which finally leads to
\[\deg_S(x-y)\leq\deg(D)=\mu(S).\]
We therefore have the inequality $M_S(K)\leq\mu(S)$.
\end{proof}

\subsection{A special case. Weierstrass gaps}

Though we are not able to prove it in full generality, it is reasonable to believe that the inequality of Theorem 2 is in fact an equality. We will now see that it is actually true if the set $S=\{P\}$ is reduced to a single point. Until the end of this section, we restrict to this situation. The study of the index of speciality, simply denoted by $\mu(P)$ in this case, is closely related to the {\it Weierstrass gap sequence} of $P$, we therefore briefly recall some basic facts of the theory: setting $\ell(nP)=\dim_kL(nP)$ and $i(nP)=\dim_k\Omega(nP)$, we have the relations
\[\ell(0)=1,\ell(nP)\leq\ell((n+1)P)\leq\ell(nP)+1\mbox{ and }\ell((2g-1)P)=g.\]
Furthermore, for $n\geq{2g-1}$, we have the identity $\ell(nP)=n-g+1$. In terms of differential forms, the Riemann-Roch formula leads to the relations
\[i(0)=g,i(-nP)\geq i(-(n+1)P)\geq i(-nP)-1\mbox{ and }i((1-2g)P)=0.\]
A {\it Weierstrass gap} of $P$ is an integer $n$ such that $l(nP)=l((n-1)P)$, which means that there does not exist a rational function $x\in K$ having a unique pole at $P$ of  exact order $n$. Equivalently, we have the identity $i(-nP)=i((1-n)P)-1$, which implies that there exists a regular differential form having a zero at $P$ of order $n-1$. It then follows that there are exactly $g$ gaps $0<n_1<n_2\cdots<n_g<2g$. The gap sequence is just the collection $(n_1,\dots,n_g)$. Notice that, by definition, the index of speciality $\mu(P)$ is just the integer $n_g$, the greatest Weierstrass gap. Given the curve $\mathcal C$, there exist a sequence $N=(n_1,\dots,n_g)$ such that all but finitely many points of $\mathcal C$ have $N$ as Weierstrass gap sequence. The points having a Weierstrass gap sequence different from $N$ are called {\it Weierstrass points}, while the others are {\it ordinary points}. If $N=\{1,2,\dots,g\}$, we say that $\mathcal C$ is a {\it classical curve}. In characteristic $0$, any curve is classical, but this is false in positive characteristic (though it is the case if the characteristic of the base field is large enough with respect to the genus of the curve).

\begin{prop} If $S=\{P\}$ is reduced to a single point, then we have the identity
\[M_S(K)=\mu(S).\]
In particular, we have the inequalities
\[g\leq M_S(K)\leq2g-1.\]
\end{prop}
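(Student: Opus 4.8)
The plan is to combine Theorem 2 with a matching lower bound. Since $S=\{P\}$, the definition gives $\deg_S(x)=-v_P(x)$, and the discussion preceding the statement identifies the index of speciality $\mu(S)=\mu(P)$ with the largest Weierstrass gap $n_g$. Theorem 2 already supplies $M_S(K)\le\mu(S)=n_g$, so the entire content is to prove the reverse inequality $M_S(K)\ge n_g$, that is, to exhibit a single rational function that resists approximation by $\mathcal O_S$.

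First I would construct a test function. Let $t\in K^\times$ be a uniformizer at $P$, so that $v_P(t)=1$, and set $x=t^{-n_g}$, which satisfies $v_P(x)=-n_g$; thus $x$ has a pole of exact order $n_g$ at $P$. Observe that $x$ need not lie in $\mathcal O_S$, since as an element of $K^\times$ it is permitted to have poles away from $P$; its existence is therefore immediate and does not conflict with $n_g$ being a gap.

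The key step is to show that $\deg_S(x-y)\ge n_g$ for every $y\in\mathcal O_S$, which by the ultrametric property of $v_P$ reduces to a short case analysis on the pole order of $y$ at $P$. If $v_P(y)>-n_g$, then $v_P(x-y)=v_P(x)=-n_g$; if $v_P(y)<-n_g$, then $v_P(x-y)=v_P(y)<-n_g$; and the remaining case $v_P(y)=-n_g$ is impossible, because it would force $y\in\mathcal O_S$ to have a pole of exact order $n_g$ at $P$, whereas $n_g$ is a Weierstrass gap and hence no function in $\mathcal O_S$ has pole order exactly $n_g$ there. In every case $v_P(x-y)\le-n_g$, so in particular $x-y\neq0$ and $\deg_S(x-y)=-v_P(x-y)\ge n_g$. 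Taking the minimum over $y\in\mathcal O_S$ and then using $x\in K^\times$ yields $M_S(K)\ge n_g=\mu(S)$, which together with Theorem 2 gives the desired equality. This cancellation-obstruction argument is where all the difficulty lies; it is elementary but hinges precisely on the gap condition, which is the reason the singleton case can be pushed to an equality.

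Finally, the stated bounds follow formally. The upper bound $M_S(K)=\mu(S)\le2g-1$ is Lemma 1, while the lower bound $M_S(K)=n_g\ge g$ holds because $n_1<\cdots<n_g$ are $g$ distinct positive integers, so their largest term is at least $g$.
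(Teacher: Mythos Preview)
Your proof is correct and follows essentially the same approach as the paper: exhibit a test function $x$ with $v_P(x)=-n_g$ and use the gap condition to rule out any $y\in\mathcal O_S$ with $\deg_S(x-y)<n_g$. The only cosmetic difference is that the paper constructs $x$ via Riemann--Roch (with poles confined to $\{P,Q\}$ for some auxiliary point $Q$) whereas you simply take $x=t^{-n_g}$ for a uniformizer $t$; your choice is more direct and equally valid, since the argument only uses $v_P(x)=-n_g$ and not the location of the other poles.
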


\begin{proof} Given a point $Q\neq P$ of $\mathcal C$, the Riemann-Roch theorem implies that there exists a rational function $x\in K$ having no poles outside the set $\{P,Q\}$ and such that $v_P(x)=-\mu(P)$, so that $\deg_S(x)=\mu(S)$. Now, the relation $\deg_S(x-y)<\mu(x)$, with $y\in\mathcal O_S$, would imply that the rational function $y$ has a unique pole at  $P$ of order $\mu(P)$, which is impossible since $\mu(P)$ is a Weierstrass gap. The two inequalities again follow from the fact that $\mu(P)$ is a Weierstrass gap.
\end{proof}

\begin{exa} Once again, taking $\mathcal C=\Bbb P^1$ and $S=\{\infty\}$, so that $K=k(T)$ and $\mathcal O_S=k[T]$, we find the inequality $M_S(K)<-1$, recovering the well known fact that the ring $k[T]$ is Euclidean with respect to the degree.
\end{exa}

We close this section with three examples where we can actually compute the Euclidean minimum, and show that for fixed $g$, the bounds of Proposition 3 are attained.

\begin{coro} Let $\mathcal C$ be an hyperelliptic curve and suppose that $P$ is a Weierstrass point on it. We then have the identity
\[M_S(K)=2g-1.\]
\end{coro}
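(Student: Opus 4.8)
The plan is to reduce everything to Proposition 3 and then to a short computation of the Weierstrass gap sequence. By Proposition 3 we have $M_S(K)=\mu(S)=\mu(P)=n_g$, the greatest Weierstrass gap at $P$, and that same proposition already supplies the upper bound $M_S(K)\leq 2g-1$. Hence it suffices to establish the reverse inequality, i.e.\ to show that $2g-1$ is itself a gap at $P$; in fact I will show that the full gap sequence of a hyperelliptic Weierstrass point is $(1,3,5,\dots,2g-1)$.

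To control the gaps I would exploit the hyperelliptic structure. A hyperelliptic curve carries a degree-$2$ morphism $\pi\colon\mathcal C\to\Bbb P^1$, whose ramification points are exactly the Weierstrass points. Thus $\pi$ is totally ramified at $P$, so $\pi^{\ast}(\pi(P))=2P$; pulling back a coordinate on $\Bbb P^1$ with a simple pole at $\pi(P)$ produces a function $x\in K$ with polar divisor exactly $2P$. In the terminology of the previous subsection this says precisely that $2$ is \emph{not} a gap at $P$.

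The rest is semigroup bookkeeping. The non-gaps at $P$ are closed under addition and contain $0$ and $2$, hence contain every even integer (realised as the polar orders of the powers $x^{n}$). On the other hand there are exactly $g$ gaps, all lying in $\{1,2,\dots,2g-1\}$. Among these $2g-1$ integers the $g-1$ even ones $2,4,\dots,2g-2$ are already non-gaps, so the $g$ gaps can only be the $g$ odd integers $1,3,\dots,2g-1$. In particular $2g-1$ is a gap and it is the largest one, whence $\mu(P)=n_g=2g-1$ and $M_S(K)=2g-1$.

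The step I expect to cost the most care is the input I am treating as known: that the Weierstrass points of the paper, defined through the gap sequence, coincide with the ramification points of $\pi$, and that one can exhibit the degree-$2$ function cleanly over an arbitrary algebraically closed field $k$. For $\mathrm{char}(k)\neq 2$ this is entirely classical; in characteristic $2$ some attention is warranted, but the argument only uses that total ramification of $\pi$ at $P$ yields a function with polar divisor $2P$, which continues to hold.
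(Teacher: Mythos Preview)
Your proof is correct and follows the same route as the paper: reduce to Proposition~3 so that $M_S(K)=\mu(P)=n_g$, and then use that a hyperelliptic Weierstrass point has gap sequence $(1,3,\dots,2g-1)$. The only difference is cosmetic: the paper simply cites this gap-sequence fact (Fulton, Exercise~8.37), whereas you supply the standard semigroup argument from the existence of a function with polar divisor $2P$; the identification of Weierstrass points with ramification points of the hyperelliptic map, which you flag as the one input needing care, is exactly what the paper is invoking through its citation.
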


\begin{proof} A well-know result on hyperelliptic curves asserts that $P$ is a Weierstrass point if and only its gap sequence is $(1,3,\dots,2g-1)$ (see for example~\cite{f}, Exercice 8.37, where the characteristic of the base field is assumed to be $0$, but everything remains true in full generality), from which we deduce the relation $\mu(P)=2g-1$. The result is then a direct consequence of Proposition 3.
\end{proof}

\begin{rem} The above result is actually true for superelliptic curves, i.e. curves $\mathcal C$ defined by affine equations $Y^n=f$, where $f\in k[X]$ and the characterictic of $k$ does not divide $n$. More precisely, if $r$ denotes the degree of $f$, then there is a canonical degree $n$ cover $\mathcal C\to\Bbb P^1$ totally ramified at $r$ or $r+1$ points. If $P$ is one of them, it is then easily checked that $2g-1$ belongs to its  gap sequence and is therefore the greatest Weierstrass gap. In particular, setting $S=\{P\}$, we find $\mu(S)=2g-1$ and Proposition 3 asserts that we have the identity $M_S(K)=2g-1$.
\end{rem}

\begin{coro} Suppose that there exists an \'etale cover $\pi:\mathcal C\setminus S\to\Bbb A^1$. We then have the identity
\[M_S(K)=2g-1.\]
\end{coro}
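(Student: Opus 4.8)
The plan is to deduce the statement from Proposition 3, by showing that $S$ reduces to a single point $P$ and that $2g-1$ is the greatest Weierstrass gap at $P$. Note first that Theorem 2 together with Lemma 1 already gives $M_S(K)\le\mu(S)\le 2g-1$, so the only thing left to prove is the reverse inequality $M_S(K)\ge 2g-1$; once $S=\{P\}$ and $\mu(P)=2g-1$ are established, Proposition 3 furnishes it directly.

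First I would extend the finite étale morphism $\pi$ to a finite morphism $\bar\pi\colon\mathcal C\to\Bbb P^1$ between the smooth projective models. Since $\pi$ is proper over $\Bbb A^1$, we have $\mathcal C\setminus S=\bar\pi^{-1}(\Bbb A^1)$ and hence $S=\bar\pi^{-1}(\infty)$. Let $t=\pi^{*}T\in\mathcal O_S$ be the pullback of the coordinate $T$ on $\Bbb A^1$, and consider the differential $\omega=dt$. Because $\pi$ is étale, $dt$ generates the sheaf of differentials on $\mathcal C\setminus S$, so it has neither zeros nor poles outside $S$; therefore $\operatorname{div}(\omega)$ is supported on $S$ and, being the divisor of a nonzero differential, it has degree $2g-2$.

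The crucial step, and the one I expect to be the main obstacle, is to check that $S$ consists of a single point $P$. This is exactly where the hypothesis bites: in characteristic zero a connected étale cover of $\Bbb A^1$ is trivial (so $g=0$ and the statement is just the already known Euclidicity of $k[T]$), whereas a nontrivial cover is possible only in positive characteristic, where it is necessarily wildly ramified above $\infty$, and it is this wild ramification that forces the fibre over $\infty$ to be concentrated at one point. Once this is granted, $\operatorname{div}(\omega)$ is supported at the single point $P$ and of degree $2g-2$, hence equal to $(2g-2)P$; in particular $\omega$ is a regular differential vanishing to order exactly $2g-2$ at $P$.

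It then suffices to translate this into the language of gaps. By the criterion recalled just before Proposition 3, the existence of a regular differential with a zero of order exactly $(2g-1)-1=2g-2$ at $P$ means precisely that $2g-1$ is a Weierstrass gap of $P$. Since every gap is smaller than $2g$, it must be the greatest one, so $\mu(S)=\mu(P)=2g-1$. Proposition 3 now gives the equality $M_S(K)=\mu(S)=2g-1$, completing the proof.
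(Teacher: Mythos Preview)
Your argument is essentially the paper's: pull back $dT$ (equivalently, the paper's $\omega$ with a double pole at $\infty$), observe it is regular and nonvanishing on $\mathcal C\setminus S$ because $\pi$ is \'etale there, conclude $\operatorname{div}(\omega)=(2g-2)P$, hence $2g-1$ is the top gap, and invoke Proposition~3. So the core is correct and matches the paper.

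The one misstep is your ``crucial step.'' You do not need to prove that $S$ is a single point: the corollary sits in the subsection that opens with ``Until the end of this section, we restrict to this situation,'' so $S=\{P\}$ is a standing hypothesis, and the paper's proof simply writes $P=\pi^{-1}(\infty)$ without further comment. Moreover, your proposed justification --- that wild ramification above $\infty$ forces the fibre to be a single point --- is not actually true. For instance, in characteristic~$2$ the degree~$3$ map $u\mapsto u^{2}+c/u$ from $\Bbb P^{1}$ to $\Bbb P^{1}$ is \'etale over $\Bbb A^{1}$ (its derivative is $c/u^{2}$, never zero) but has two points over $\infty$; composing with a suitable Artin--Schreier cover ramified only over $\{0,\infty\}$ produces higher-genus examples with $|S|>1$, where in fact $\mu(S)<2g-1$, so the conclusion of the corollary would fail. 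In short: drop that paragraph, cite the standing assumption $S=\{P\}$, and your proof is complete and identical to the paper's.
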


\begin{proof} Consider a differential form $\omega$ on $\Bbb P^1$ having a double pole at $\infty$ (it is unique, up to multiplication by an element of $k^\times$). Since the canonical divisor of $\Bbb P^1$ has degree $-2$, it follows that $\omega$ is regular and non-vanishing outside $\infty$. In particular, the cover $\pi$ being \'etale outside $\infty$, the pull-back $\pi^*\omega$ is regular and nowhere vanishing outside $P=\pi^{-1}(\infty)$, so that $\omega$ has a unique zero at $P$, of order $2g-2$, and thus the integer $2g-1$ belongs to the gap sequence of $P$, which concludes the proof.
\end{proof}

\begin{coro} Suppose that the curve $\mathcal C$ is classical and that $P$ is not a Weierstrass point. We then have the identity
\[M_S(K)=g.\]
\end{coro}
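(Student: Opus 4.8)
The plan is to read this off directly from Proposition 3 together with the identification of $\mu(P)$ as the greatest Weierstrass gap. First I would recall, from the discussion preceding Proposition 3, that for a singleton $S=\{P\}$ the index of speciality $\mu(P)$ coincides with $n_g$, the largest integer in the Weierstrass gap sequence $(n_1,\dots,n_g)$ of $P$. Since Proposition 3 already establishes the equality $M_S(K)=\mu(S)=\mu(P)$, the whole statement reduces to verifying that $n_g=g$ under the stated hypotheses.

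Next I would simply unwind the two hypotheses. By the definition given above, calling $\mathcal C$ a classical curve means precisely that the generic (ordinary) gap sequence is $N=(1,2,\dots,g)$; and saying that $P$ is not a Weierstrass point means exactly that $P$ is an ordinary point, so that its gap sequence coincides with $N$. Consequently the gap sequence of $P$ is $(1,2,\dots,g)$, whose greatest element is $n_g=g$.

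Combining these observations yields $M_S(K)=\mu(P)=n_g=g$, which is the claim.

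As for the main difficulty, there is essentially none beyond correctly matching the definitions: the corollary is merely the specialization of the general machinery to the classical, ordinary case. All the substantive input—the reduction $M_S(K)=\mu(P)$ via the Riemann--Roch argument of Proposition 3, and the interpretation of $\mu(P)$ as the top gap $n_g$—has already been secured, so that the only content here is the remark that \emph{classical} pins down the ordinary gap sequence as $(1,\dots,g)$ while \emph{non-Weierstrass} guarantees that $P$ realizes that sequence.
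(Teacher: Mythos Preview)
Your proposal is correct and follows exactly the paper's approach: invoke Proposition~3 to identify $M_S(K)$ with $\mu(P)=n_g$, then use the hypotheses (classical curve, $P$ ordinary) to conclude that the gap sequence of $P$ is $(1,2,\dots,g)$ and hence $n_g=g$. The paper's own proof is the same argument compressed into a single sentence.
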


\begin{proof} It is again a consequence of Proposition 3, since in this case, the gap sequence of $P$ is $(1,2,\dots,g)$, leading to $\mu(P)=g$.
\end{proof}

\section{The analogy with the number field case}

\subsection{Upper bounds for the Euclidean minima of number fields}  As it is stated in the previous paragraph, Theorem 2 could seem quite far from the kind of results obtained in the number field case. We will now show that it actually leads to an upper bound for the Euclidean minima which is, {\it mutatis mutandi}, a complete analogy with the known results and conjectures. We first of all briefly summarize the number field case: let $K$ be a number field, i.e. a finite extension of the field $\Bbb Q$ of rational numbers, and denote by $\mathcal O_K$ its ring of integer (the integral closure of $\Bbb Z$ in $K$). The {\it Euclidean minimum} of $K$ is the real number $M(K)$ defined as
\[M(K)=\sup_{x\in K}\inf_{y\in\mathcal O_K}\{|N_{K/\Bbb Q}(x-y)|\},\]
where $|\cdot|$ denotes the usual (Archimedean) absolute value. The study of Euclidean minima is a hard and classical topic in algebric number theory. One of the main results, obtained by E. Bayer-Fluckiger in~\cite{b} in 2006, is the following  general upper bound for Euclidean minima:

\begin{theo}[Bayer-Fluckiger, 2006] Let $n$ be the degree of the extension $K/\Bbb Q$ and denote by $d_k$ its discriminant. We then have the inequality
\[M(K)\leq2^{-n}|d_k|.\]
\end{theo}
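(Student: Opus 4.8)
I would prove this theorem of Bayer-Fluckiger by the standard geometry-of-numbers route, reducing it to a covering estimate for the lattice $\mathcal O_K$. The plan is first to transport everything into $K_{\mathbb R}=K\otimes_{\mathbb Q}\mathbb R\cong\mathbb R^{r_1}\times\mathbb C^{r_2}$ through the $n$ archimedean embeddings $\sigma_1,\dots,\sigma_n$ (so that $n=r_1+2r_2$). The norm extends to the continuous function $N(z)=\prod_{i=1}^n\sigma_i(z)$, the ring of integers $\mathcal O_K$ becomes a full lattice of covolume $2^{-r_2}\sqrt{|d_k|}$, and $K$ is dense in $K_{\mathbb R}$. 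Since the map $z\mapsto\inf_{\lambda\in\mathcal O_K}|N(z-\lambda)|$ is continuous, the supremum defining $M(K)$ may be taken over all of $K_{\mathbb R}$. The problem thus becomes the covering statement: for every $z\in K_{\mathbb R}$, produce $\lambda\in\mathcal O_K$ with $|N(z-\lambda)|$ as small as possible.

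Next I would dominate the norm by the trace form $t(z)=\sum_{i=1}^n|\sigma_i(z)|^2$, for which $\mathcal O_K$ is a Euclidean lattice of covolume $\sqrt{|d_k|}$. The arithmetic-geometric mean inequality gives $|N(z)|^{2/n}\le t(z)/n$, whence $|N(z-\lambda)|\le\bigl(t(z-\lambda)/n\bigr)^{n/2}$ and therefore
\[M(K)\le\Bigl(\frac{R^2}{n}\Bigr)^{n/2},\]
where $R$ denotes the covering radius of $\mathcal O_K$ with respect to $t$. It then suffices to bound $R^2$ by $\tfrac{n}{4}\,|d_k|^{2/n}$.

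The covering radius is not controlled by the covolume alone, so the core of the argument must exploit the rigidity of $\mathcal O_K$: for any nonzero $\lambda\in\mathcal O_K$ one has $|N(\lambda)|\ge1$, so the same inequality yields $t(\lambda)\ge n|N(\lambda)|^{2/n}\ge n$, i.e. the first minimum satisfies $\lambda_1\ge\sqrt n$. I would feed this lower bound into Minkowski's second theorem on successive minima, which bounds $\prod_i\lambda_i$ in terms of the covolume $\sqrt{|d_k|}$, together with a transference inequality bounding $R$ in terms of the $\lambda_i$; this is the step meant to produce the factor $2^{-n}$ and the required estimate for $R^2$, hence the inequality $M(K)\le2^{-n}|d_k|$.

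The hard part is precisely this last step: extracting the sharp constant $2^{-n}$ and the correct power of the discriminant. The conjecturally optimal bound $M(K)\le2^{-n}\sqrt{|d_k|}$ is essentially equivalent to Minkowski's conjecture on the product of $n$ inhomogeneous linear forms, which remains open in general; the weaker exponent on $|d_k|$ is exactly the price paid for an \emph{unconditional} statement. Controlling this loss---concretely, quantifying the geometry of a Minkowski-reduced integral basis so as to turn the successive-minima bounds into the clean factor $2^{-n}$---is the delicate point on which the whole estimate turns.
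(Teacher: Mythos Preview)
The paper does not give a proof of this theorem at all: it is quoted as a known result of Bayer--Fluckiger, with a reference to \cite{b}, and serves only as motivation for the function-field analogue proved in Theorem~8. So there is no ``paper's own proof'' to compare your attempt against.

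As for your sketch itself, the overall framework (embed $K$ into $K_{\mathbb R}$, dominate the norm by the trace form via the AM--GM inequality, and reduce to a bound on the covering radius of $\mathcal O_K$ for the trace form) is indeed the strategy Bayer--Fluckiger uses. However, your proposal is not a proof: you yourself flag the last step as unresolved, and that step is the entire content of the theorem. Minkowski's second theorem together with a generic transference inequality will \emph{not} give $R^2\le\tfrac{n}{4}\,|d_k|^{2/n}$ on the nose; one loses a factor depending on the volume of the unit ball, which is of size roughly $n^{n/2}$, and this swamps the $2^{-n}$. The way Bayer--Fluckiger closes the gap is not through successive minima and transference, but by exploiting the \emph{integrality} of the trace form on $\mathcal O_K$: every nonzero vector has squared length an integer at least $n$, and an inductive (layer-by-layer) covering argument for such lattices yields the covering bound $R^2\le\tfrac{n}{4}\,|d_k|^{2/n}$ directly. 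If you want to turn your outline into an actual proof, that is the missing ingredient, and you should consult \cite{b} for the precise lemma.
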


\subsection{The function fields analogue}

The aim of this section is to give a bound for the Euclidean minima of function fields, analogous to Theorem 7. We first of all fix the notation and settings: the field $\mathcal Q=k(T)$ will play the role of the field $\Bbb Q$, the ring $\mathcal Z=k[T]$ replacing $\Bbb Z$. We denote by $K$ a finite, separable extension of $\mathcal Q$ of degree $n$ and by $\mathcal O_K$ the integral closure of $\mathcal Z$ in $K$. The discriminant of the extension $\mathcal O_K/\mathcal Z$, denoted by $d_K$, considered as an ideal of $\mathcal Z$, is generated by a polynomial $f$ and we set $\deg(d_K)=\deg(f)$. The {\it Euclidean minimum} $M(K)$ is the integer defined as
\[M(K)=\max_{x\in K}\min_{y\in\mathcal O_K}\{\deg(N_{K/\mathcal Q}(x-y))\},\]
where we have set $\deg=\deg_\infty$ (the extention to $\mathcal Q$ of the usual degree of a polynomial, cf. the first remark of \S2).
\begin{rem} Before continuing, we must stress on two important points:
\begin{itemize}
\item The rings $\Bbb Z$ and $\mathcal Z$ are both Euclidean. In the first case, the associated Euclidean function is the absolute value, which is multiplicative i.e. $|xy|=|x|\cdot|y|$, while in the latter case the Euclidean function is the degree, which is additive i.e. $\deg(xy)=\deg(x)+\deg(y)$. The Euclidean minimum is then defined in a similar way, but because of this difference, the bounds for function fields should be considered as a 'logarithmic' analogue of those obtained in the number field case.
\item For function fields case, the discriminant $d_K$ does not take account of the ramification above $\infty$ (the valuation associated to the prime element $T^{-1}$); this is the main difference with number fields, where the discriminant completely describes the ramification of the extension.
\end{itemize}
\end{rem}
We can now state and prove the main result of this section, which is a weaker form of Theorem 2 and can be considered as the analogue of Theorem 7.

\begin{theo} The assumptions and hypothesis being as above, suppose that the extension $K/\mathcal Q$ is tamely ramified above $\infty$. We then have the inequality
\[M(K)\leq\deg(d_K)-n.\]
\end{theo}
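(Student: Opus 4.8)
The plan is to deduce Theorem 8 from Theorem 2 through the geometric dictionary, converting the bound $\mu(S)\le 2g-1$ of Lemma 1 into the stated arithmetic inequality by means of the Hurwitz formula. To set up the picture, note that since $k$ is algebraically closed the separable extension $K/\mathcal Q$ corresponds to a finite morphism $\pi\colon\mathcal C\to\Bbb P^1$ of degree $n$, where $\mathcal C$ is the smooth projective model of $K$. Let $S=\pi^{-1}(\infty)$ be the fibre over the point at infinity. By definition $\mathcal O_K$, the integral closure of $\mathcal Z=k[T]$ in $K$, consists precisely of the rational functions regular outside $S$, so that $\mathcal O_K=\mathcal O_S$.

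The key step is to identify the two degree functions. For $x\in K^\times$ the valuation of the norm at $\infty$ satisfies $v_\infty(N_{K/\mathcal Q}(x))=\sum_{P\in S}f_P\,v_P(x)$, where $f_P$ is the residue degree of $P$ over $\infty$; as $k$ is algebraically closed every $f_P$ equals $1$, and therefore
\[\deg(N_{K/\mathcal Q}(x))=-v_\infty(N_{K/\mathcal Q}(x))=-\sum_{P\in S}v_P(x)=\deg_S(x).\]
Together with $\mathcal O_K=\mathcal O_S$ this gives the identity $M(K)=M_S(K)$, whence $M(K)\le\mu(S)\le 2g-1$ by Theorem 2 and Lemma 1.

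It then remains to verify that $\deg(d_K)-n\ge 2g-1$, and here the Hurwitz formula enters. Writing $\delta_P$ for the different exponent at a point $P$ of $\mathcal C$, Riemann--Hurwitz applied to $\pi$ (whose target has genus $0$) gives $\sum_P\delta_P=2g-2+2n$. The discriminant $d_K$ records only the ramification over the affine line, so that $\deg(d_K)=\sum_{P\notin S}\delta_P=(2g-2+2n)-\sum_{P\in S}\delta_P$. The tameness hypothesis above $\infty$ is used precisely here: it forces $\delta_P=e_P-1$ for each $P\in S$, and since $\sum_{P\in S}e_P=n$ we obtain $\sum_{P\in S}\delta_P=n-|S|$. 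Hence $\deg(d_K)-n=2g-2+|S|$, which is at least $2g-1$ because $S$ is nonempty; chaining the inequalities yields $M(K)\le\mu(S)\le 2g-1\le\deg(d_K)-n$.

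The step demanding the most care, and the genuine obstacle, is the bookkeeping of the different exponents above $\infty$ together with the verification that $\deg(d_K)$ counts exactly the affine contribution $\sum_{P\notin S}\delta_P$. This is where tameness is indispensable: wild ramification above $\infty$ would only enlarge $\sum_{P\in S}\delta_P$, thereby decreasing $\deg(d_K)-n$ and potentially pushing it below $2g-1$, so the bound could fail without the hypothesis.
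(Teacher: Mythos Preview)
Your argument is correct and follows essentially the same route as the paper: set up the geometric dictionary $\mathcal O_K=\mathcal O_S$ with $S=\pi^{-1}(\infty)$, identify $\deg(N_{K/\mathcal Q}(\cdot))$ with $\deg_S(\cdot)$ so that $M(K)=M_S(K)$, invoke Theorem~2 and Lemma~1 to get $M(K)\le 2g-1$, and then convert this to $\deg(d_K)-n$ via Hurwitz together with the tame bound on the different above $\infty$. The paper phrases the last step through the branch divisor $B=\pi_*R$ and the inequality $\deg(B)\le\deg(d_K)+n-1$, whereas you compute $\sum_{P\in S}\delta_P=n-|S|$ directly and obtain the sharper identity $\deg(d_K)-n=2g-2+|S|$; these are two packagings of the same calculation.
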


\begin{proof} From a geometric point of view, the extension $K/\mathcal Q$ corresponds to a finite cover $\pi:\mathcal C\to\Bbb P^1$. Setting $S=\pi^{-1}(\infty)$, the ring $\mathcal O_K$ then coincides with the ring $\mathcal O_S$ defined in the first section and, taking spectra, the extension $\mathcal O_K/\mathcal Z$ defines a finite cover $\mathcal C\setminus S\to\Bbb A^1$, which is just the restriction of $\pi$ to the open set $\Bbb A^1\subset\Bbb P^1$. For any rational function $x\in K$, a straightforward computation leads to the relation
\[\deg(N_{K/\mathcal Q}(x))=\deg_S(x).\]
In particular, we find the identity
\[M(K)=M_S(K).\]
Let $R$ be the ramification divisor of $\pi$ and $B=\pi_*R$ its branch divisor (see~\cite{h}, \S IV.2 for general definitions and properties). We then have the identity $\deg(R)=\deg(B)$ and $d_K$, considered as a divisor, is just the restriction of $B$ to $\Bbb A^1$. Moreover, if $g$ denotes the genus of $\mathcal C$, the Hurwitz formula (\cite{h}, Corollary 2.4 of Chapter IV) reads
\[2g-2=\deg(B)-2n.\]
Now, if the ramification above $\infty$ is tame, we then have the inequality
\[\deg(B)\leq\deg(d_K)+n-1.\]
In this case, combining Lemma 1 and Theorem 2, we finally obtain the relations
\[M(K)\leq2g-1=\deg(B)-2n+1\leq\deg(d_K)-n,\]
which concludes the proof.
\end{proof}

\begin{theo} Suppose that the extension $K/\mathcal Q$ is totally ramified above $\infty$ and that the characteristic of $k$ does not divide $n$. We then have the inequalities
\[\frac12\left(\deg(d_K)-n-1\right)\leq M(K)\leq\deg(d_K)-n.\]
\end{theo}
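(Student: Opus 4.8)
The upper bound $M(K)\leq\deg(d_K)-n$ is already furnished by Theorem 8, so the entire content of this statement lies in establishing the lower bound
\[
M(K)\geq\tfrac12\bigl(\deg(d_K)-n-1\bigr).
\]
The plan is to exploit the stronger hypotheses here—total ramification above $\infty$ together with the tameness guaranteed by the condition that $\mathrm{char}(k)\nmid n$—to pin down the genus $g$ exactly, and then to invoke Proposition 3. Indeed, total ramification means $S=\pi^{-1}(\infty)$ consists of a single point $P$, so that $M(K)=M_S(K)=M_{\{P\}}(K)$, and Proposition 3 gives the identity $M(K)=\mu(P)\geq g$. Thus it suffices to show that $g\geq\tfrac12(\deg(d_K)-n-1)$.

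To bound $g$ from below I would return to the Hurwitz formula $2g-2=\deg(B)-2n$ used in the proof of Theorem 8, and compare the total branch degree $\deg(B)$ with $\deg(d_K)$. Recall that $d_K$, viewed as a divisor, is precisely the restriction of $B$ to $\mathbb A^1$, so that $\deg(B)=\deg(d_K)+\delta_\infty$, where $\delta_\infty$ is the contribution to the ramification divisor coming from the single point $P$ above $\infty$. First I would compute $\delta_\infty$ under total and tame ramification: the ramification index at $P$ equals $n$, and tameness makes the corresponding exponent in the different exactly $n-1$, whence $\delta_\infty=n-1$. Substituting into Hurwitz yields
\[
2g-2=\deg(d_K)+(n-1)-2n=\deg(d_K)-n-1,
\]
so that $g=\tfrac12(\deg(d_K)-n-1)+1$. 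Combined with $M(K)=\mu(P)\geq g$ from Proposition 3, this already gives the claimed lower bound (in fact with an extra $+1$ to spare), while the upper bound from Theorem 8 completes the statement.

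The main technical point—and the step I would be most careful about—is the exact evaluation $\delta_\infty=n-1$ for the different exponent at the totally, tamely ramified point $P$. This is where the hypothesis $\mathrm{char}(k)\nmid n$ is essential: it guarantees tame ramification at $P$, so that the different exponent is $e-1=n-1$ rather than something strictly larger as would occur in the wild case. I would justify this via the standard formula for the different of a tamely ramified extension of local fields (e.g.\ \cite{s}, Chapter III), applied to the completion at $P$. Once $\delta_\infty$ is correctly identified, the remainder is the bookkeeping above; no further obstacle is expected, since Proposition 3 and Theorem 8 do all the remaining work.
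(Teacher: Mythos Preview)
Your proposal is correct and follows essentially the same route as the paper: invoke Theorem~8 for the upper bound, observe that total ramification makes $S$ a single point so that Proposition~3 applies, then use tameness at the unique point above $\infty$ to turn the inequality $\deg(B)\leq\deg(d_K)+n-1$ from the proof of Theorem~8 into the equality $\deg(B)=\deg(d_K)+n-1$, and finish with Hurwitz. You even notice, as the paper does not make explicit, that Proposition~3 actually gives $M(K)\geq g=\tfrac12(\deg(d_K)-n-1)+1$, so the stated lower bound holds with a unit to spare.
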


\begin{proof} Following the notation of the proof of Theorem 8, the ramification above $\infty$ being tame, we have the identity
\[\deg(B)=\deg(d_K)+n-1\]
Now, since the set $S$ is reduced to a point, Proposition 3 asserts that we have the inequalities
\[g-1\leq M(K)\leq2g-1\]
and the Hurwitz formula finally leads to the relations
\[\frac12\left(\deg(d_K)-n-1\right)\leq M(K)\leq\deg(d_K)-n,\]
as desired.
\end{proof}

\section{The case of a general base field}

\subsection{General base fields} Until now, we assumed that the base field $k$ is algebraically closed. We now drop this assumption and we only suppose that $k$ is perfect. Most of the constructions of the previous sections actually apply in this more general situation, with only minor changes: first of all, going back to section 2 and setting $G_k=\mbox{Gal}(\bar k/k)$, we must assume that the set $S\subset\mathcal C(\bar k)$ is stable under the action of $G_k$. The definition of the $S$-degree $\deg_S$ and of the Euclidean minima are exactly the same. Concerning the index of speciality, we just replace the group $\Bbb Z[S]$ by the subgroup $\Bbb Z[S]^{G_k}$ of $G_k$-invariants. In particular, it turns out that $\mu(S)$ can be strictly bigger than in the case of an algebraically closed field $k$.

\begin{exa} Suppose that $\mathcal C$ is an elliptic curve and that $S=\{P,Q\}$ consists of two $G_k$-conjugated points. Setting $D=P+Q$, we then find $\Bbb Z[S]^{G_k}=\Bbb Z[D]$ an therefore $\mu(S)=\deg(D)=2$. On the other hand, over $\bar k$, setting $F=P-Q\in\Bbb Z[S]$, we find $\Omega(-F)=0$ and it actually turns out that $\mu(S)=\deg(F)=0$.
\end{exa}

The result in Theorem 2 still holds. However, in this case we no longer have the inequality $\mu(S)\leq2g-1$, which was of crucial importance in section 3. Nevertheless, if $S$ is reduced to a point, then everything works perfectly.

\begin{rem} If the residual degrees (i.e. the degrees of the fields of definition) of the elements of $S$ are globally coprime, which is the same assumption made in~\cite{a}, then we still have the inequality $\mu(S)\leq2g-1$ and all the results of the previous sections are true.
\end{rem}

\subsection{An example: hyperelliptic curves}

We close the paper with an example where the Euclidean minimum can be explicitly computed, showing that its behaviour is different than in the case of an algebraically closed field. In the following, we assume that the characteristic of $k$ is different from $2$. Let $g$ be a positive integer, fix a polynomial $f\in k[X]$ of degree $2g+2$ whose leading coefficient is not a square in $k^\times$ and consider the hyperelliptic curve $\mathcal C$ defined by the affine equation $Y^2=f$. By construction, its genus is equal to $g$ and there exist two points $P$ and $Q$ at infinity which are permuted by $G_k$. There is a canonical double cover $\mathcal C\to\Bbb P^1$ and, setting $S=\{P,Q\}$, the ring $\mathcal O_S=k[X,Y]$ is an extension of $k[X]$ of degree $2$ which coincides with the integral closure of $k[X]$ in $K$. Moreover, as noticed in the proof of Theorem 8, for any rational function $x\in K$, we have the identity
\[\deg_S(f)=\deg(N_{K/\mathcal Q}(x)),\]
where we have set $\mathcal Q=k(X)$. As in the previous section, we simply write $M(K)$ instead of $M_S(K)$.

\begin{theo} The notation and assumptions being as above, we have the identity
\[M(K)=2g.\]
\end{theo}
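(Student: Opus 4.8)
The plan is to prove the two inequalities $M(K) \leq 2g$ and $M(K) \geq 2g$ separately, using the fact (established in Section 4.1) that Theorem 2 still holds over a perfect base field, so that $M(K) = M_S(K) \leq \mu(S)$, where $\mu(S)$ is now computed over the $G_k$-invariant divisors $\Bbb Z[S]^{G_k}$. First I would compute $\mu(S)$ explicitly. Since $S = \{P, Q\}$ consists of two conjugate points permuted by $G_k$, the group $\Bbb Z[S]^{G_k}$ is generated by the single divisor $D = P + Q$, so every invariant divisor has the form $mD$ for $m \in \Bbb Z$, of degree $2m$. Thus $\mu(S) = \min\{2m \mid \Omega(-mD) = 0\}$. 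Over $\bar k$ one can use asymmetric divisors like $P - Q$ to kill the differentials sooner, but over $k$ we are constrained to even degrees, and this is precisely what will force $\mu(S)$, and hence $M(K)$, up to $2g$ rather than $2g-1$.

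Next I would carry out the Riemann--Roch bookkeeping for the divisors $mD$ on the hyperelliptic curve. The canonical class has degree $2g-2$, and on $\mathcal C$ the differential forms can be written explicitly in terms of $X$ and $Y$; the point at infinity splits into the two conjugate points $P, Q$, each with $v_P(X) = v_Q(X) = -1$. I would show that $\Omega(-(g-1)D) \neq 0$ by exhibiting a nonzero invariant differential (for instance a suitable multiple of $dX/Y$) regular away from $S$ with the required vanishing, so that the degree-$(2g-2)$ divisor $(g-1)D$ does not suffice; and then show $\Omega(-gD) = 0$ by a degree count, since $gD$ has degree $2g > 2g-2 = \deg(\omega)$ for any nonzero differential $\omega$, forcing $\Omega(-gD) = 0$ exactly as in the proof of Lemma 1. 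This gives $\mu(S) = 2g$, and hence $M(K) \leq 2g$ immediately from Theorem 2.

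For the reverse inequality $M(K) \geq 2g$, I would follow the strategy of Proposition 3: exhibit a rational function $x \in K$ witnessing that the minimum over $y \in \mathcal O_S$ cannot drop below $2g$. The natural candidate is a function whose polar behaviour at $S$ is governed by the invariant divisor $gD$ and which cannot be approximated by any $y \in \mathcal O_S$ to make $\deg_S(x - y)$ smaller than $2g$; concretely, since $\Omega(-(g-1)D) \neq 0$ there is an obstruction, coming from Serre duality / the nonvanishing differential, to the existence of a function in $\mathcal O_S$ with the prescribed principal part of $S$-degree $2g-1$ that is invariant. The key point is that any $y \in \mathcal O_S$ is defined over $k$, so its pole orders at $P$ and $Q$ coincide, and this parity constraint is exactly what prevents approximation below degree $2g$.

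The main obstacle I anticipate is the reverse inequality, and specifically the interplay between the Galois-invariance constraint and the approximation. Over $\bar k$ one could use the asymmetry of $P$ versus $Q$ to approximate better, but here every element of $\mathcal O_S$ respects the $G_k$-action and hence has balanced pole orders at the two infinite points; making this obstruction rigorous — showing that the ``missing'' odd degree $2g-1$ genuinely cannot be attained over $k$ — is the delicate step, and it is where the nonvanishing of the invariant differential in $\Omega(-(g-1)D)$, together with the self-duality pairing from Serre duality used in the proof of Theorem 2, must be invoked carefully to produce a function $x$ with $\min_{y} \deg_S(x-y) = 2g$.
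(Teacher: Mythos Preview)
Your upper bound argument is correct and genuinely different from the paper's. You compute $\mu(S)=2g$ via Riemann--Roch on the $G_k$-invariant divisors $mD$ (using that $(g-1)D$ is canonical on a hyperelliptic curve, so $\Omega(-(g-1)D)\neq 0$, while $\deg(gD)>2g-2$ forces $\Omega(-gD)=0$), and then invoke Theorem~2. The paper instead argues entirely with the explicit norm form: writing $x=a+bY$, $y=c+dY$, the non-square leading coefficient of $f$ prevents cancellation and gives $\deg N_{K/\mathcal Q}(x-y)=2\max\{\deg(a-c),\,g+1+\deg(b-d)\}$; the Euclidean property of $k[X]$ then lets one choose $c,d$ with $\deg(a-c),\deg(b-d)\le -1$. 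Your route uses the machinery of \S2--4 and is more conceptual; the paper's is elementary and self-contained.

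Your lower bound, however, has a real gap. You propose to ``follow the strategy of Proposition~3,'' but that strategy does not transfer directly: in the single-point case the key was that $\mu(P)$ is a Weierstrass gap, so \emph{no} $y\in\mathcal O_S$ has a pole of exactly that order. Here, writing $\mathfrak P$ for the degree-$2$ closed point, one has $\ell(gD)-\ell((g-1)D)=1$, so $g$ is \emph{not} a gap for $\mathfrak P$, and functions $y\in\mathcal O_S$ with $v_{\mathfrak P}(y)=-g$ do exist. What must be shown is finer: that the image of $\mathcal O_S$ in the principal-part space at $\mathfrak P$ (modulo order $g-1$) has codimension one, dual to the one-dimensional $\Omega(-(g-1)D)$, and that some $x\in K$ lands outside it. You gesture at this with ``Serre duality / the nonvanishing differential'' and the parity remark, but never produce the witness $x$ or carry out the pairing argument. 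The paper bypasses all of this by simply exhibiting $x=YX^{-1}$ and checking directly with the norm form that $\min_y\deg N_{K/\mathcal Q}(x-y)=2g$.
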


\begin{proof} Any element of $x\in K$ can be uniquely written as a sum $x=a+Yb$, with $a,b\in\mathcal Q$, and we have the relation
\[N_{K/\mathcal Q}(x)=a^2-fb^2.\]
We want to minimize the degree (meant as $\deg_{\infty}$) of the rational function
\[N_{K/\mathcal Q}(x-y)=(a-c)^2-f(b-d)^2\in\mathcal Q,\]
where $y=c+dY$ is an element of $\mathcal O_S$, with $c,d\in k[X]$. The assumption on the leading coefficient of $f$ leads to the relation
\[\deg\left((a-c)^2-f(b-d)^2\right)=2\max\{\deg(a-c),g+1+\deg(b-d)\}.\]
Once again, we have set $\deg=\deg_\infty$ (in particular, the degree of a rational function is not the number of its poles, counted with their multiplicity). Now, as remarked in the first section, the fact that the ring $k[X]$ is Euclidean with respect to the degree implies that there exist two elements $c,d\in k[X]$ (which are in fact unique) such that $\deg(a-c)\leq-1$ and $\deg(b-d)\leq-1$. We therefore find the inequality
\[\min_{y\in k[X]}\{\deg(N_{K/\mathcal Q}(x-y))\}\leq2g\]
and thus $M(K)\leq2g$. Finally, it is easily checked that for the rational function $x=YX^{-1}\in K$ we have the identity
\[\min_{y\in k[X]}\{\deg(N_{K/\mathcal Q}(x-y))\}=2g,\]
so that $M(K)=2g$, which concludes the proof.
\end{proof}


\begin{thebibliography}{9}

\bibitem[A1963]{a} Armitage, J. Vernon.
{\it Euclid's algorithm in algebraic function fields}.
J. London Math. Soc. {\bf 38} (1963).

\bibitem[BF2006]{b}
Bayer-Fluckiger, Eva.
{\it Upper bounds for  Euclidean minima of algebraic number fields}.
J. Number Theory {\bf 121}, no. 2 (2006).


\bibitem[F1969]{f}
Fulton, William.
{\it Algebraic curves.}
Mathematics Lecture Notes Series (1969).

\bibitem[H1983]{h}
Hartshorne, Robin.
{\it Algebraic geometry}.
GTM {\bf52} (1983).

\bibitem[L1995]{l}
Lemmermeyer, Franz.
{\it The Euclidean algorithm in algebraic number fields}.
Exposition. Math. {\bf 13} (1995), no. 5.

\bibitem[S1988]{s}
Serre, Jean-Pierre.
{\it Algebraic groups and class fields}.
GTM {\bf117} (1988).


\end{thebibliography}
\end{document}